\newcommand{\C}{\mathbb{C}}
\newcommand{\N}{\mathbb{N}}
\newcommand{\R}{\mathbb{R}}
\newcommand{\Z}{\mathbb{Z}}
\DeclareMathOperator{\spn}{span}
\DeclareMathOperator{\chr}{char}
\newcommand{\ket}[1]{| #1\rangle}
\theoremstyle{plain}
\newtheorem{theorem}{Theorem}
\newtheorem{claim}{Claim}
\newtheorem{conjecture}{Conjecture}
\newtheorem{prop}{Proposition}
\theoremstyle{definition}
\newtheorem{definition}{Definition}
\theoremstyle{remark}
\newtheorem{remark}{Remark}
\newtheorem*{acknowledgements}{Acknowledgements}
\begin{document}
\title{Trigonometry through Hopf algebras}
\author{Aaron Brookner}
\thanks{}
\subjclass[2010]{} 
\keywords{Hopf algebras, representative coalgebra, Hopf algebra objects}
\date{\today}

\begin{abstract}
We consider various coalgebras and Hopf algebras related to trigonometry in this paper. 
In section \ref{sect:trigco} we answer the question of whether the trigonometric coalgebra 
can be defined to have the structure of a Hopf algebra, and over which fields. In sections 
\ref{sect:units} and \ref{sect:FFT}, we generalize this construction to $p$-dimensional 
coalgebras, where $p$ is a prime number, and show how this generalization is related to 
the finite Fourier transform. Then we consider the structure of the Hopf algebra of 
polynomial functions on the circle $S^1$ in section \ref{sect:AG}. Finally, we defined a 
new Hopf algebra object in section \ref{sect:tangent} related to the angle-addition formula 
for the function $\tan\theta$.
\end{abstract}
\maketitle

\section{Introduction and motivation}
Formulae such as the angle-addition laws of $\sin\theta$ and $\cos\theta$, the binomial 
theorem, and other such formulae involving the expression of $f(x+y)$ in terms of $f(x)$ 
and $f(y)$, for a function $f$, are often encapsulated by coalgebras\cite{represent}. 
In this paper, we consider the trigonometric coalgebra, which is the two dimensional 
coalgebra corresponding to the angle-addition formulae. We investigate certain properties of 
it which depend in an interesting way on the field over which the coalgebra is defined. 
Motivated by this investigation we define a $p$-dimensional coalgebra, which encapsulates 
the properties of circulant matrices and the finite Fourier transform in a similar manner. 

We then consider the Hopf algebra of trigonometric polynomials, i.e. the polynomial functions 
on the unit circle, considered as an algebraic variety. To conclude, we define a Hopf 
algebra-object (not formally a Hopf algebra, because it requires a completion of the 
tensor product) corresponding to the addition formula for $\tan\theta$, which we have not 
been able to find in the literature.

\section{The trigonometric coalgebra}\label{sect:trigco}
Throughout this paper, $k$ will represent a field. Consider the following trichotomy 
of cases for $k$:
    \begin{enumerate}[(a)]
        \item $\chr k=2$;
        \item $\chr k\neq 2$ and $-1$ has a square-root in $k$ (which we will write as 
        $\sqrt{-1}\in k$);
        \item $\chr k\neq 2$ and $\sqrt{-1}\notin k$.
    \end{enumerate}

In case (b) we will fix a root of $-1$ and write $\sqrt{-1}=i$, 
referring to the other as $-i$. If we wish to refer to cases (a) and (b) simultaneously, 
we will say ``if $\sqrt{-1}\in k$,'' acknowledging that $\sqrt{-1}=1$ in case (a).

Recall the definition of the \textbf{trigonometric coalgebra} $C_k$, which is defined over 
any base field (or ring): it is the 2-dimensional vector space with basis 
$\{c, s\}$, with coalgebra structure maps:
\begin{align*}
\Delta(c) &= c\otimes c - s\otimes s, & \epsilon(c) &= 1,\\
\Delta(s) &= s\otimes c+c\otimes s, & \epsilon(s) &= 0.
\end{align*}

This is clearly motivated by the angle-addition laws of $\sin\theta$ and $\cos\theta$: 
when $k=\R$, $C_\R$ is the \emph{representative coalgebra} associated to the 2-dimensional 
representation of the circle group $S^1$ as rotation matrices (for the more general 
constructions see \cite{represent}). The point of this interpretation is that $C_\R$ is 
the linear span of the matrix coefficients of this representation.

A natural question to ask is: is $C_k$ a bialgebra? It is certainly not a subalgebra of 
functions on $S^1$ in the case $k=\R$ or $k=\C$, and this should be remembered to avoid 
confusion.

\subsection{Bialgebra extension problem for $C_k$} By ``bialgebra extension problem'', 
we mean the question of whether a coalgebra can be extended to a bialgebra with 
the appropriate algebra structure morphisms, and if this bialgebra is unique (up to 
isomorphism).
\begin{theorem}\label{thm:extension_problem}
 The solution to the bialgebra extension problem for $C_k$ depends on $k$, according to the 
 trichotomy given above -- 
 \begin{enumerate}[C{a}se (a):]
  \item There are two non-isomorphic bialgebra structures on $C_k$, and 
  the set of valid structure constants for the algebra maps are in bijection with $k$.
  \item There is one isomorphism class of bialgebra which $C_k$ extends to, with 
  two possible sets of structure maps to define it.
  \item In this case $C_k$ is not a bialgebra.
 \end{enumerate}
\end{theorem}

\begin{proof}
 For a coalgebra $C$, we write $G(C)$ for the set of grouplike elements. We prove Theorem 
 \ref{thm:extension_problem} in the reverse direction, since this is the order of 
 increasing difficulty:
 \begin{enumerate}[(a)]
  \item[Case (c)] If $C_k$ is a bialgebra, the identity must be a grouplike element. Checking 
  the general element $\alpha c+\beta s\in C_k$, we see it is grouplike iff $\alpha=1$ and 
  $\beta^2=-1$. Thus in case (c) there are no grouplikes.
  \item[Case (b)] When $\sqrt{-1}\in k$, the prior calculation now shows that
  $$G(C_k)=\{c\pm is\}.$$
  Since $|G(C)|=2$ in case (b), we have a basis of $C_k$ consisting of grouplikes. Thus 
  $C_k$ is isomorphic as a coalgebra to $k[\Z/2]$. It is easy to show (see \cite{DNR} p. 169) 
  that in cases (b) and (c), any two-dimensional Hopf algebra is isomorphic to 
  $k[\Z/2]$. So, by transfer of structures, coalgebra isomorphisms $C_k\to k[\Z/2]$ are 
  in bijection with the possible Hopf algebra structure maps for $C_k$.

  Coalgebra isomorphisms are just given by bijections of the grouplike elements in this 
  case. Let us write $\ket 0, \ket 1$ for the elements $0,1\in\Z/2$, where $\Z/2$ is thought 
  of as a subset of $k[\Z/2]$ (this is to prevent notational overload and confusion: 
  e.g. $\ket 0$ is the multiplicative identity of $k[\Z/2]$). Then there are exactly two 
  isomorphisms $\phi, \phi': C_k\to k[\Z/2]$, defined by: 
  \begin{align*}
   \phi(c+is) &= \ket 0 & \phi'(c+is) &= \ket 1 \\
   \phi(c-is) &= \ket 1 & \phi'(c-is) &= \ket 0.
  \end{align*}
  The multiplication table associated to choosing $\phi$ is thus: 
  \begin{align*}
   (c\pm is)^2 &= c+is \\
   (c\pm is)(c\mp is) &= c-is,
  \end{align*}
  In the basis of $\{c,s\}$, the multiplication table for $\phi$ reads as:
  \begin{align*}
   c^2 &= c & cs &= 0 \\
   sc &= 0 & s^2 &= -is.
  \end{align*}

  The multiplication table of $\phi'$ corresponds to conjugating the right hand sides of the 
  above (i.e. $i\mapsto -i$); we think of this as the Galois group of $\C$ over $\R$ acting 
  on the structure constants, or acting on the isomorphisms $\{\phi, \phi'\}$. 
  We could not hope to give a better explanation of how to interpret this action 
  than \cite{qiaochu}. In any case, the antipode is the identity.

  \item[Case (a)] In this modular characteristic, we see $G(C)=\{c+s\}$ contains 
  one element, so it must be the identity element. Since $\epsilon(c)=1$, co-unitality 
  implies $c^2=c+Bs$ for some $B\in k$. Because $\ker\epsilon=\langle s\rangle$ is a 
  two-sided ideal, and any two-dimensional algebra is commutative, we altogether have (for 
  some $B,C,D \in k$): 
  \begin{align*}
   c^2 &= c+Bs & sc &= Cs \\
   cs &= Cs & s^2 &= Ds.
  \end{align*}
  Adding the second line to the first and last lines implies $B+C=0$ and $B+D=1$, thus we 
  may write
  \begin{align*}
   c^2 &= c+\lambda s & sc &= \lambda s \\
   cs &= \lambda s & s^2 &= (\lambda+1)s.
  \end{align*}

  To show this is a bialgebra, we must check the compatibility of structure maps, and make 
  sure that multiplication is actually associative. We will find both to be true for all 
  $\lambda\in k$. Once we do this, it will follow that $C_k$ is a Hopf algebra, since 
  by inspection the identity map serves as the antipode.

  It is known that the coalgebra structure maps being algebra homomorphisms, is equivalent to 
  the algebra structure maps being coalgebra homomorphisms. So it suffices to apply 
  $\epsilon$ and $\Delta$ to both sides of the multiplication table above, and check that 
  both maps are multiplicative.

  Since the above multiplication table is commutative, and trivially 
  $(c^2)c=c(c^2), (s^2)s=s(s^2)$, it suffices to check $(c^2)s=c(cs)$ and $(s^2)c=s(sc)$ to 
  verify associativity. We find the former two equal $\lambda^2 s$, while 
  the latter two equal $\lambda(\lambda+1)s$. We have now shown $C_k$ to be a Hopf algebra.

  Last we must prove our claim that this yields exactly two isomorphism classes. Let us 
  temporarily write $C_k(\lambda)$ for the bialgebra given above, as the isomorphism class 
  depends on $\lambda$. Notice that in $C_k(0)$, the elements $c$ and $s$ are orthogonal 
  idempotents. In particular, no element squares to 0 since $\chr k=2$. However, 
  in $C_k(1)$ we see $s^2=0$, showing $C_k(0)\not\cong C_k(1)$..

  This shows that $C_k(\lambda)$ has at least two isomorphism classes, as $\lambda$ 
  varies over $k$. We now give the isomorphism
  $\Phi: C_k(0)\to C_k(\lambda)$ for $\lambda\neq 1$:
  $$\Phi(c) = c+\frac\lambda{\lambda+1}s\hspace{.5mm}, 
  \hspace{2cm}
  \Phi(s) = \frac 1{\lambda+1}s.$$ 
  This shows that $C_k(\lambda)\cong C_k(0)$ for $\lambda\neq 1$. Thus the generic case 
  (meaning for all but finitely many $\lambda\in k$), is 
  the case in which $C_k(\lambda)$ has a complete set of orthogonal idempotents.
\end{enumerate}
\end{proof}

\begin{remark}
 It is known that there are three isomorphism classes of two-dimensional 
 Hopf algebras in characteristic 2 \cite{DNR}. One is $k[\Z/2]$, another is its dual, and 
 the third is a self-dual Hopf algebra. Here we have given a natural construction 
 of the two non-group algebras.
\end{remark}

\begin{remark}
 The computation in case (b) of $G(C_k)=\{c\pm is\}$ clearly brings to mind Euler's identity. 
 In section \ref{sect:AG}, we will view $C_k$ as a sub-coalgebra of $\mathcal O(S^1)$, 
 which explains this coincidence. This is because the grouplike elements of $k(G)$, 
 the commutative Hopf algebra of $k$-valued functions on a group $G$, are precisely the 
 characters of $G$.
\end{remark}

\section{Groups of units in group algebras}\label{sect:units}
Since $\dim C_k=2,$ the prevalence of the number 2 in the above trichotomy is to be 
expected. We wish to generalize the trigonometric coalgebra, and the above numerology, 
to dimension and characteristic $p$. We proceed along the following lines:
\begin{enumerate}[(1)]
 \item Working over $k=\C$, we calculate the group of units in the algebra 
 $\C[\Z/p]$, which leads us to an automorphism $F$ of $\C[\Z/p]$.
 \item We define $\ket 0, \ket 1, \ldots, \ket{p-1}$ the same way that we defined $\ket 0, 
 \ket 1$ above: they are the elements of $\Z/p$, when thought of as a subset of $k[\Z/p]$. 
 We look at the Hopf algebra structure coefficients, with respect to the new basis 
 $F\left(\ket 0\right), \ldots, F\left(\ket{p-1}\right)$, and find them 
 to be integers as well.
 \item We will reduce both structure coefficients modulo $p$, and we will show that the 
 two resulting Hopf algebras are non-isomorphic in characteristic $\chr k=p$, despite being 
 isomorphic (by construction) over $k=\C$.
\end{enumerate}

Let us write the Hopf algebra $\C[\Z/p]$ as $A_p$ here for brevity. The procedure outlined 
above was initially motivated by the following observation:
\begin{claim}
 The group of units, $A_2^\times$, of the algebra $A_2$ is the complement of the two 
 lines $\ell_1, \ell_2$ spanned by $\ket 0+\ket 1$ and $\ket 0-\ket 1$.

 Under the isomorphism $\phi: C_k\to k[\Z/2]$ constructed in Theorem 1, case (b), 
 we see $2c\mapsto\ket 0+\ket 1, 2is\mapsto\ket 0-\ket 1$. Thus these are the two lines 
 spanned by $c$ and $s$, under $\phi$.
\end{claim}

We now have a question which has an easy-to-answer generalization in $\Z/p$: 
what is the group of units of $A_p$? And does it lead to a coalgebra generalizing $C_\C$?

If $\alpha_0\ket 0+\alpha_1\ket 1+\ldots+\alpha_{p-1}\ket{p-1}\in A$ is a general element, 
and we wish to solve the equation 
$$(\alpha_0\ket 0+\alpha_1\ket 1+\ldots+\alpha_{p-1}\ket{p-1})\cdot (\beta_0\ket 0+\beta_1\ket 1+\ldots+\beta_{p-1}\ket{p-1}) = \ket 0,$$
this is written in matrix notation as 
$$\left[\begin{matrix}
\alpha_0 & \alpha_{p-1} & \ldots & \alpha_1 \\
\alpha_1 & \alpha_0 & \ldots & \alpha_2 \\
\ldots & \ldots & \ldots & \ldots \\
\alpha_{p-1} & \alpha_{p-2} & \ldots & \alpha_0
\end{matrix}\right]
\cdot
\left[\begin{matrix}\beta_0 \\ \beta_1 \\ \ldots \\ \beta_{p-1} \end{matrix}\right] 
= 
\left[\begin{matrix}1 \\ 0 \\ \ldots \\ 0 \end{matrix}\right].$$ 

The coefficient matrix is a circulant (i.e. the diagonals are constant). Letting $\omega$ be 
a nontrivial $p$\textsuperscript{th} root of unity, it is known that the eigenvalues of this 
circulant matrix are $\lambda_j=\sum_{i=0}^{p-1} a_i\omega^{ij}$ where $0\le j<p-1$. 
Each equation $\lambda_j=0$ defines a hyperplane $H_j\subset A$, orthogonal to the line 
spanned by $F_j=\frac 1p\sum_{i=0}^{p-1}\ket i\omega^{ij}\in A$ (under the inner product 
in which $\big\{\ket i\big\}_{i=0}^{p-1}$ forms an orthogonal basis in $A_p$). The 
complement of these $p$ hyperplanes is hence $A_p^\times$.

We will explain the normalization factor of $\frac 1p$ in the definition of $F_j$ in the 
next section. In summary: with $k=\C$ we have found a new basis 
$\big\{F_i\big\}_{i=0}^{p-1}$ of $A_p$, seemingly coming from $A_p^\times$. 
In the following section, we will write the (co)multiplication laws of $A_p$ in this new 
basis, and use this to define a Hopf algebra $C_{k,p}$ over any base field $k$. 

\begin{remark}
 If $\chr k=2$, then $\ell_1=\ell_2$ in $A_2$, and $A_2^\times$ is the complement of a line. 
 This coincides with the general result that if $\chr k=p$ and $G$ is a $p$-group, 
 then $k[G]^\times$ is the complement of the augmentation ideal $\ker\epsilon$ (which is 
 again a hyperplane), since this ideal coincides with the Jacobson radical of the ring.
\end{remark}

\section{Finite Fourier transforms}\label{sect:FFT}
\begin{prop}
 In $A_p$, the basis $\big\{F_i\big\}$ defined above has structure maps defined as 
 \begin{align*}
  \Delta(F_j) &= \sum_{I+J=j\text{ mod }p} F_I\otimes F_J 
  & 
  \epsilon(F_j) &= \delta_{0j}.
 \end{align*} 
\end{prop}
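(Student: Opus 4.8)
The plan is to exploit that the standard group-algebra comultiplication makes each $\ket i$ grouplike, i.e. $\Delta(\ket i)=\ket i\otimes\ket i$ and $\epsilon(\ket i)=1$, and then to transport these formulas into the $\{F_j\}$ basis by the finite Fourier transform. The single computational ingredient throughout is the orthogonality relation asserting that $\sum_{i=0}^{p-1}\omega^{im}$ equals $p$ when $p\mid m$ and $0$ otherwise; this geometric sum already underlies the definition of the $F_j$, and it will reappear at every step below.

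First I would record the inverse change of basis. Starting from $F_j=\frac1p\sum_i\omega^{ij}\ket i$, orthogonality yields $\ket i=\sum_{j=0}^{p-1}\omega^{-ij}F_j$, and the normalization factor $\frac1p$ is precisely what makes this inversion come out clean — this explains the choice promised in the previous section. The counit is then immediate: applying $\epsilon(\ket i)=1$ and linearity gives $\epsilon(F_j)=\frac1p\sum_i\omega^{ij}$, which by the same orthogonality sum equals $\delta_{0j}$ since $0\le j<p$.

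For the comultiplication I would apply $\Delta$ linearly to $F_j$ and use grouplikeness to obtain
$$\Delta(F_j)=\frac1p\sum_{i=0}^{p-1}\omega^{ij}\,\ket i\otimes\ket i.$$
Substituting the inversion formula into each of the two tensor factors recasts this as $\frac1p\sum_{I,J}F_I\otimes F_J\sum_{i=0}^{p-1}\omega^{i(j-I-J)}$, and a final application of orthogonality collapses the inner sum to $p$ exactly when $I+J\equiv j\pmod p$ and to $0$ otherwise. This leaves $\Delta(F_j)=\sum_{I+J\equiv j}F_I\otimes F_J$, the asserted formula.

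I do not anticipate a genuine obstacle; the content is bookkeeping in the discrete Fourier transform. The only points demanding care are the sign and normalization conventions in the inversion $\ket i=\sum_j\omega^{-ij}F_j$ — botching either would shift the indices of the convolution or introduce spurious factors of $p$ — and verifying that the resulting structure constants are all $0$ or $1$, which is the integrality observation licensing the definition of $C_p$ over an arbitrary base field as foreshadowed at the end of Section 3.
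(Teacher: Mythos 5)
Your proposal is correct and is essentially the paper's own argument, written out in full: both obtain the counit by applying $\epsilon$ to the definition of $F_j$, and both compute $\Delta(F_j)$ by combining the Fourier transform pair $F_j=\frac1p\sum_i\omega^{ij}\ket i$, $\ket i=\sum_j\omega^{-ij}F_j$ with the grouplikeness of the $\ket i$ and the orthogonality sum $\sum_i\omega^{im}=p\,\delta_{p\mid m}$. The paper merely compresses this into ``apply $\Delta$ to the inversion formula and insert it into the definition,'' which unwinds to exactly your substitution-and-collapse computation.
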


\begin{proof}
 Apply $\epsilon$ to the definition of $F_j$ above. As for $\Delta$, observe 
 $F_j=\frac 1p\sum_{k=0}^{p-1}\ket k \omega^{jk}$ and 
 $\sum_{k=0}^p F_k \omega^{-jk}=\ket j$. These are the formulae for the finite Fourier 
 transform and its inverse, for $\Z/p$!
 
 Apply $\Delta$ to the latter equation and insert this into the former. 
 The coefficients being geometric series will yield the proposed result.
\end{proof}

Observe also that the $F_i$ are orthogonal idempotents in $A_p$, and 
that the identity is $\ket 0=\sum_{i=0}^{p-1} F_i$. Finally, the antipode of $A_p$ being 
defined by $S\left(\ket i\right)=\ket{p-i}$ shows that $S(F_i)=F_{p-i}$. Thus all of the 
Hopf algebra structure maps have integer coefficients, in the basis $\{F_i\}$. However, we 
will show below that in characteristic $p$, the resulting coalgebra has other multiplication 
laws which would extend it to a Hopf algebra as well.

\begin{remark}
 This proposition explains the factor of $\frac 1p$ in our definition of $F_j$: 
 without it, the right hand side of the equations for $\epsilon$ and $\Delta$ 
 would be multiplied by $p$, and this would not yield a coalgebra 
 upon reducing coefficients modulo $p$.
\end{remark}
\begin{remark}
 The factor of $\frac 1p$ in the finite Fourier transform is analogous to the factor of 
 $\frac 1{2\pi}$ in the continuous Fourier transform. However, in the continuous version 
 one may choose, by convention, whether the $\frac 1{2\pi}$ factor appears in the Fourier 
 transform formula, or the inverse transform formula (or whether both contain a factor of 
 $\frac 1{\sqrt{2\pi}}$).
 For our purposes however, we must choose $\frac 1p$ to be in the finite Fourier transform.
\end{remark}

\subsection{The circulant coalgebras}
Again $k$ is an arbitrary field.
\begin{definition}
 Let $C_{k,p}$ be the $p$-dimensional coalgebra over $k$, defined by the basis 
 $\big\{F_i\big\}$, and by the formulae in the above proposition with 
 coefficients considered as elements of $k$. We call this the 
 \textbf{circulant coalgebra}, since it reflects the symmetry of the 
 circulant matrix, as demonstrated in the computations above.
\end{definition}

Let us here refer to a modified trichotomy of conditions on $k$:
\begin{enumerate}[(a')]
 \item $\chr k=p$;
 \item $\chr k\neq p$ and 1 has a nontrivial $p$\textsuperscript{th} root in $k$;
 \item $\chr k\neq p$ and 1 does not have a nontrivial $p$\textsuperscript{th} root in $k$.
\end{enumerate}

As we said, we can consider the structure coefficients of the above Hopf algebra 
as elements of $k$, giving us the \textbf{(standard) circulant Hopf algebra} $H_{k,p}$. 

In case (b') the circulant coalgebra is 
spanned by grouplike elements, like the trigonometric coalgebra was in case (b). Thus any 
Hopf algebra structure extending $C_{k,p}$ will be isomorphic to $H_{k,p}$, 
since $\Z/p$ is the only group of order $p$. There are $p!$ possible multiplication tables, 
for the same reasons as in case (c) above. 

In case (c'), $H_{k,p}$ is a Hopf algebra extending $C_{k,p}$, unlike the 
trigonometric coalgebra in case (c), which has no Hopf algebra extension: 
more on this after the following theorem. $H_{k,p}$ has only 1 grouplike element, the 
identity. We are unsure if there are other algebra structure maps which extend $C_{k,p}$ 
to a Hopf algebra in case (c'), or if there are multiple isomorphism classes of such Hopf 
algebras.

\begin{theorem}
 In case (a'), the set of possible structure constants for the algebra maps 
 which extend $C_{k,p}$ into a bialgebra are in bijection with $k$.
\end{theorem}

\begin{proof}
Recall that the dual vector space of a coalgebra always comes equipped with a natural algebra 
structure, and vice-versa in the finite-dimensional case. Let us use this intepretation: 
the coalgebra $C_{k,p}$ is the dual of the algebra 
$A_{k,p}=k[t]/\langle t^p-1\rangle=k[t]/\langle (t-1)^p\rangle$. 
Bialgebra structures on one are in bijection with bialgebra structures on the other by 
duality, and we find it simpler to work here with $A_{k,p}$.

The maps $\epsilon$ and $\Delta$ must be algebra homomorphisms. Thus we must choose 
$\epsilon(t)\in k, \Delta(t)\in A\otimes A$ to be $p^{\text{th}}$ 
roots of unity in their respective codomains, hence $\epsilon(t)=1$. 
Writing $A\otimes A\cong k[s,t]/\langle (s-1)^p, (t-1)^p\rangle$, we see the 
$p^{\text{th}}$ roots of unity are given by the affine hyperplane of polynomials whose 
coefficients sum to 1. If we write $\Delta(t)=f(s,t)$, then counitality is equivalent 
to saying $f(1,t)=f(t,1)=t$. So if $\Delta(t)=Ats+Bt+Cs+D$, we have the independent equations:
\begin{align*}
A+B+C+D &= 1 \\
B+D &= 0 \\
C+D &= 0.
\end{align*}
So then $\Delta(t)=(1-B)ts+Bt+Bs-B$, and the theorem follows from the rank of this 
system of linear equations.
\end{proof}

Let us call this bialgebra $C_{k,p}(B)$ to emphasize the dependence on the constant $B\in k$. 
We have not analyzed the question of if $C_{k,p}(B)$ has an antipode, except in the 
standard circulant Hopf algebra case above (which is $C_{k,p}(0)$).

We have also not analyzed how the isomorphism class of $C_{k,p}$ varies as the 
arbitrary structure constant $B\in k$ varies. However, referring to refer 
to \cite{classification}, we believe a similar pattern as with the trigonometric coalgebra 
will occur. Specifically their Theorem 2.1 shows that if $H$ is a $p$-dimensional 
Hopf algebra over an \emph{algebraically closed} field $k$ of characteristic $p$, 
there are exactly 3 possible isomorphism classes. (This is a modular version of the 
well-cited Kac-Zhu theorem \cite{kaczhu}, which states that when $k$ 
is algebraically closed and characteristic $0$, any Hopf algebra of dimension $p$ 
is isomorphic to $k[\Z/p]$.)

In the 3 cases in \cite{classification}, they show $H$ is generated by a single element; 
in the two cases which are not group algebras, one is generated by a nilpotent element 
(case 2 in \cite{classification}), and the other is generated by an idempotent element 
(case 3). We believe that the case (3) of that theorem will hold for generic $B$, 
and case (2) will hold for a finite number of special values, since this is exactly 
what occured for the trigonometric coalgebra in our modular case (a), above.

\begin{remark}
We saw in case (c'), there exist a Hopf algebra extending the coalgebra $C_{k,p}$. This 
contrasts with case (c) from Theorem \ref{thm:extension_problem}, where there were no 
grouplikes. In particular, $G(C_{k,p})=\{F_0+F_1+\ldots+F_{p-1}\}$ is nonempty.

Thus $C_k\not\cong C_{k,2}$ in cases (c, c'). Ultimately this traces back to the calculation 
in section 2 that $\ket 0+\ket 1\mapsto -2is$ under the coalgebra isomorphism 
$\phi: C_k\to k[\Z/2]$. This then traces back to the definition of comultiplication: 
in $C_{k,2}$ we defined $\Delta(F_0)=F_0\otimes F_0+F_1\otimes F_1$. Notice that 
$C_{k,2}$ reflects the addition formulae for the \emph{hyperbolic} trigonometric functions! 
We might then call $C_{k,2}$ the \textbf{hyperbolic (trigonometric) coalgebra}.

The fact that Euler's identity is defined as an equation over $\C$ and has non-real constant 
in it, yet the coefficients in the addition formulae of $\sin(\theta), \cos(\theta)$ are 
integers, can therefore be said to ultimately cause this discrepancy in the existence 
question for the bialgebra extension problems for $C_k$ vs. $C_{k,2}$, in cases (c) vs. (c'). 
The grouplikes in $C_{k,2}$ are $F_0\pm F_1$, which corresponds to the 
fact that $\cosh(x)\pm\sinh(x)=e^{\pm x}$, which is a homomorphism 
from $(\R,+)$ to $(\R,\cdot)$.
\end{remark}

\section{Trigonometric polynomials}\label{sect:AG}
Considering trigonometry from a co-algebraic standpoint, we should also look at the regular 
functions ring $\mathcal O\left(S^1\right)=k[c,s]/\langle c^2+s^2=1\rangle$, i.e. the ring 
of polynomials on the circle. This has the structure of a Hopf algebra since $S^1$ is an 
algebraic group variety: $\Delta(c), \Delta(s)$ are defined the same as in the 
trigonometric coalgebra, but multiplication is different. 
Since $\Delta(c)^2+\Delta(s)^2=1\in \mathcal O(S^1)\otimes\mathcal O(S^1)$, 
and similarly $\epsilon(c)^2+\epsilon(s)^2=1$, $\Delta$ and $\epsilon$ are 
well-defined algebra homomorphisms (using the universal mapping properties of polynomial 
algebras and of quotients).

We refer back to the trichotomy in \ref{thm:extension_problem}.
\subsection{Cases (b) and (c)}
In cases (b) and (c), we see that $\mathcal O(S^1)$ has a natural direct sum decomposition 
into sub-coalgebras. In case (b) we find 
\begin{equation}\label{eqn:decompositionB}
\mathcal O(S^1)=\bigoplus_{n\in\Z} C_n,
\end{equation}
where $C_n$ is the 1-dimensional span of $(c+is)^n$. Thus it is grouplike, and 
$\mathcal O(S^1)\cong k\left[\Z\right]$ as Hopf algebras.

In case (c), the decomposition becomes:
\begin{equation}\label{eqn:decompositionC}
 \mathcal O(S^1)=\bigoplus_{i=0}^\infty C_i,
\end{equation}
where $C_0$ is the span of the identity element, and $C_n$ is the span of $\left\{T_n(c), 
U_{n-1}(c)\cdot s\right\}$, where $T_n, U_n$ are the Chebyshev polynomials of the 
1\textsuperscript{st} and 2\textsuperscript{nd} kinds respectively.

Formulae \ref{eqn:decompositionB} and \ref{eqn:decompositionC} give decompositions 
into simple objects, showing that in cases (b) and (c), $\mathcal O(S^1)$ is semisimple as a 
coalgebra. In both formulae, the summands are the 
representative coalgebras of irreducible representations of the circle as a compact group: 
\ref{eqn:decompositionB} enumerates the complex-valued representations, and 
\ref{eqn:decompositionC} enumerates the real-valued ones.

Thus, these decompositions should be thought of as a polynomial version of the 
Peter-Weyl theorem for $S^1$.

\subsection{Case (a)}
In the modular characteristic, the above decompositions do not work. For 
\ref{eqn:decompositionB}, the only square root of $-1=1$ is $1$, and $(c+s)^2=1$. 
As for \ref{eqn:decompositionC}, the Chebyshev polynomials of both types have 
even coefficients in all degrees at least 2: so they are at most linear for all $n$, 
when reduced modulo 2. Furthermore, $\mathcal O(S^1)$ is no longer semisimple: 
$s^2$ is a primitive element, so the span of $\{1,s^2\}$ forms an indecomposable, 
but not irreducible, sub-coalgebra.

The most interesting question to us is whether $\mathcal O(S^1)$ is pointed as a coalgebra, 
since by many authors' account this is the definition of a quantum group. We saw in the 
prior subsection that in case (c), $\mathcal O(S^1)$ is not pointed, whereas in case (b) 
it is. We are presently unsure of the answer in case (a) in general: it is true for any 
field containing $\overline{\Z/2}$, the algebraic closure of $\Z/2$ as we next show, 
but we do not know if it is true for $k=\Z/2$.

One partial result in determining pointed-ness is from \cite{sweedler} (p. 158), 
whose proof we replicate, but whose conclusions we slightly expand:
\begin{theorem}\label{thm:sweedler}
 A simple subcoalgebra of a cocommutative coalgebra $C$ over the field $k$, is the 
 dual-coalgebra to a finite field extension $K\supset k$ (considered as a $k$-algebra).
\end{theorem}
\begin{remark}
 In \cite{sweedler}, this result was stated for algebraically closed $k$, in which case it 
 states that any cocommutative coalgebra must be pointed. Since pointed-ness is preserved 
 by extension of scalars, this shows that $\mathcal O(S^1)$ is pointed when 
 $k\supset\overline{\Z/2}$ as claimed.
\end{remark}
\begin{proof}
 Any simple coalgebra is finite-dimensional, and in the duality of finite dimensional 
 coalgebras $C$ with finite dimensional algebras $C^*$, it is established that ideals of 
 $C^*$ are in bijection with subcoalgebras of $C$. Thus $C^*$ is simple as a $k$-algebra, 
 i.e. it is a field, proving the claim.
\end{proof}

Thus if $\mathcal O(S^1)$ is not pointed over $k=\Z/2$, it contains a simple sub-coalgebra 
whose dimension is a power of 2.

We will now describe what we know of the structure of $\mathcal O(S^1)$.
\subsubsection{Square-roots of unity} 
Here we present our partial results on the grouplike elements of $\mathcal O(S^1)$. First, 
$\mathcal O(S^1)$ has grouplike elements 1 and $c+s$, and we are unsure if it has more, 
which is a main impediment in determining if it is pointed or not. 

Since the antipode of $\mathcal O(S^1)$ is the identity, any grouplike element must square 
to 1. If $x=\sum\big(a_i s^i+b_ics^i\big)$, we find by elementary calculations that 
$x^2=1$ if and only if $a_0+b_0=1$ and $a_i+b_i+b_{i-1}=0$ for all $i\ge 1$.

We may treat $a_0$ as a free variable, and let $b_0=1+a_0$, and inductively we may treat 
$a_i$ as a free variable, and $b_i=1+\sum_{j\le i}a_j$. Since this polynomial must 
terminate, $a_N=0$ for some smallest number $N$, and then $b_{N'}=b_N$ for all $N'\ge N$. 
Thus we require $b_N=0$ and thus $\sum_{i=0}^{N-1} a_i=1$. These conditions are necessary 
and sufficient for the polynomial to terminate and be an idempotent. We remain unsure of 
which idempotents, if any apart from 1 and $c+s$, are grouplikes.

\subsubsection{Primitives, and a formula for comultiplication}
We see that $cs$ and $s^2$ are primitive elements; since we are 
in characteristic 2, applying the Frobenius endomorphism, we know $s^{2^n}$ is primitive for 
all $n\ge 1$. So $(cs)^{2^n}$ is also primitive, but $(cs)^2=s^2+s^4$, so the higher powers 
are contained in the span of $\{s^{2^n}\}$. Thus $\{s^{2^n}\}_{n=1}^\infty\cup\{cs\}$ 
forms a linearly independent set of primitives. Again, we are unsure if the span of this set 
exhausts the set of all primitive elements.

Since $s^2$ is primitive, and not $c^2$, we find that $\{s^n\}\cup\{cs^n\}$ is a more 
natural basis of $\mathcal O(S^1)$ than $\{c^n\}\cup\{sc^n\}$ to work with.

This observation about primitives actually led us to the formula for the coproduct in this 
basis. First, let $[\ell]_2$ denote the binary expansion of $\ell$. 
Since $s^{2^n}$ is primitive, $\Delta(s^{2i})=\prod_k s^{2^k}\otimes 1+1\otimes s^{2^k}$, 
where $k$ ranges over the set of positions in $[2i]_2$ that contain 1 (we define 
the $2^i$'s digit as having position $i$). Expanding the product, we find 
$$\Delta(s^{2i})=\sum_{\ell, m} s^\ell\otimes s^m,$$ 
where $m=2i-\ell$, and $\ell$ ranges over the numbers such that $[\ell]_2$ has 1s only in 
positions that $[2i]_2$ has 1s (in computer science terms, this is the set of numbers such 
that the logical \textsc{and} of $\ell$ and $2i$ equals $\ell$). Then 
$\Delta(s^{2i+1})=(c\otimes s+s\otimes c)\sum_{\ell,m} s^\ell\otimes s^m$, and 
$\Delta(cs^{2i})=(c\otimes c+s\otimes s)\sum_{\ell,m} s^\ell\otimes s^m$. Finally, 
using the fact that $cs$ is also primitive, we arrive at the formula 
$$\Delta(cs^{2i+1})=cs^{2i+1}\otimes 1+1\otimes cs^{2i+1}+(s\otimes s)\Delta(c)
\sum_{\ell', m'}s^{\ell'}\otimes s^{m'},$$ 
where $\ell'$ ranges over the numbers such that $[\ell']_2$ has 1s only in the positions that 
$[2i-2]_2$ has 1s, and $m'=2i-2-\ell'$. This formula presents $\Delta(cs^{2i+1})$ as the 
sum of a ``primitive part'', and a summation of terms with smaller exponents in $s$.

If we let $V_1=\spn\{s^{2i}\}, V_2=\spn\big(\{cs^{2i+1}\}\cup\{1\}\big), 
W_1=\spn\{s^{2i+1}\}$, and $W_2=\spn\{cs^{2i}\}$, the above formulae show that 
$V_1$ and $V_2$ are sub-coalgebras of $\mathcal O(S^1)$, 
and that $\Delta(W_1), \Delta(W_2)\subset W_1\otimes W_2+W_2\otimes W_1$. We see that 
$V_1\cap V_2=\spn\{1\}$, and that all other intersections are trivial, so that 
$\mathcal O(S^1)=(V_1+V_2)\oplus(W_1\oplus W_2)$ is a decomposition of $\mathcal O(S^1)$ 
into two sub-coalgebras.

\subsection{The Chebyshev basis}
Possibly of future utility, we found a second basis interesting to work with. As noted 
above, the Chebyshev polynomials contain even coefficients in degrees at least 2. However, 
finding that the lead coefficient of $T_n$ is $2^n$ and the lead coefficient of 
$U_n$ is $2^{n-1}$, we considered the polynomials 
$$\widetilde{T_n}(x)=T_n\left(\frac x2\right) \hspace{1in} 
 \widetilde U_n(x)=2U_n\left(\frac x2\right)$$.

It follows from induction and the recurrence relations 
\begin{align*}
T_{n+1}(x) &= 2xT_n(x)-T_{n-1}(x) \\
U_{n+1}(x) &= 2xU_n(x)-U_{n-1}(x), \\
\end{align*} 
that $\widetilde{T}_n(x), \widetilde{U}_n(x)$ have integral coefficients as well. 
Furthermore, these polynomials are monic, and thus they form a basis.

\begin{conjecture}
 The modified Chebyshev polynomials satisfy $\widetilde T_n(x)=x^n$ iff $n=2^k$ for some $k$, 
 and $\widetilde U_n(x)=x^n$ iff $n=2^k-1$ for some $k$.
\end{conjecture}
One is able to prove the ``if'' direction of these claims using the fact that 
$\left\{\widetilde T_n(x), \widetilde U_n(x)\right\}_{n\in\N}$ form a so-called 
Lucas sequence. Specifically, one proves inductively that
$\widetilde U_{2n}(x)=\widetilde U_n^2(x)$ and $\widetilde T_{2n}(x)=\widetilde T_n(x) 
\widetilde U_n(x)$.

As for the ``only if'' direction: we have checked this up to $n=2^{30}+1$, 
using the above recursion relations, the GMP big-num(ber) library, and 
speeding up calculations by representing a degree $n$ polynomial over $\Z/2$ as a 
bitstring of length $n+1$. We were able to perform these calculations on a home laptop over 
the course of a night (this requires the computer have at least 6\textsc{GB} of accessible 
\textsc{RAM}). We have included the code in the online attached file ``\texttt{main.cpp}''.

With these observations and Theorem \ref{thm:sweedler} in mind, we conjecture that if 
$\mathcal O(S^1)$ is not pointed over $k=\Z/2$, there will be a simple subcoalgebra of the 
form $\left\{\widetilde T_n(c), \widetilde U_{n-1}(c)\cdot s\right\}$, 
where the index $n$ ranges either over $2^k\le n\le 2^{k+1}-1$, or $2^k-1\le n\le 2^{k+1}$. 

\section{The tangential Hopf algebra-object}\label{sect:tangent}
\subsection{Motivation}
Finally, we define a coalgebra/Hopf algebra which we have not seen in the literature, and 
which corresponds to another trigonometric identity. The tangent angle-addition law 
$$\tan(\alpha+\beta)=\frac{\tan\alpha+\tan\beta}{1-\tan\alpha\cdot\tan\beta}$$ looks like 
it should define a representative coalgebra, with the formula 
$\Delta(t)=\frac{1\otimes t+t\otimes 1}{1-t\otimes t}$. Expanding the bottom as a 
geometric series, we write 
$\Delta(t)=\sum_{n=0}^\infty t^{n+1}\otimes t^n+t^n\otimes t^{n+1}$. 

This indicates that we should consider the domain of $\Delta$, i.e. the coalgebra under 
construction, to be $k[[t]]$, and the codomain to be $k[[t]]\widehat\otimes k[[t]]$, where 
$\hat\otimes$ indicates that we take the $I$-adic completion of $k[[t]]\otimes_k k[[t]]$ 
with respect to the ideal $I=\langle t\otimes 1,1\otimes t\rangle$. This is because the 
formula for $\Delta(t)$ is not an element of $k[[t]]\otimes_k k[[t]]$. 

If we replace $t\mapsto ht$ on both sides of the formula for $\Delta(t)$ and cancel an 
overall factor of $h$ (this is valid since $k[t][[h]]$ is a domain), we get 
$\Delta(t)=\sum_{n=0}^\infty h^{2n}\left(t^{n+1}\otimes t^n+t^n\otimes t^{n+1}\right)$. 
This demonstrates our coalgebra as a 2\textsuperscript{nd} order deformation 
of the polynomial coalgebra $k[t]$.

\subsection{Definition}
This may be formalized: let $\mathcal C$ be the monoidal category of modules over 
$k[t][[h]]$, where the tensor bifunctor is the usual tensor product of $k$-modules, 
followed by $I$-adic completion along $I=\langle 1\otimes h,h\otimes 1\rangle$. 
We write this tensor followed by completion as $\widehat\otimes$.

The above calculations show that the unit object in this category $T=k[t][[h]]$ is a 
coalgebra-object. Defining $\epsilon(t)=0, S(t)=-t$, and using the usual 
algebra structure of $T$, we further see that $T$ is a Hopf algebra-object in 
$\mathcal C$.

We call $T$ the \textbf{tangential Hopf algebra (object)}. We choose to avoid the 
name \emph{tangent coalgebra}, in case the theory of Lie co-groups should develop to a 
point where this leads to a naming conflict.

\begin{remark}
 We may also consider the Hopf-object 
 $\Big(k[c,s,t]/\langle c^2+s^2=1, thc=s\rangle\Big)[[h]]$ in $\mathcal C$, where 
 $\Delta(t), \epsilon(t), S(t)$ is defined as above, $\Delta(c), \epsilon(c), S(c)$ is 
 defined as in $\mathcal O(S^1)$, and similarly for $s$, and call this the 
 \textbf{full trigonometric Hopf algebra (object)}.
\end{remark}

\begin{acknowledgements}
I would like to thank Prof. Susan Montgomery for entertaining my questions, and referring me 
to the paper \cite{classification} just as it emerged on the cutting-edge. And I would like 
to thank my advisor, Nicolai Reshetikhin for helping me to avoid the 
overly common ``death by planning" antipattern. Finally, Mario Sanchez for asking the 
question ``Is the trigonometric coalgebra an algebra as well?'' during seminar, which 
ultimately led me to address the questions in this paper.
\end{acknowledgements}

\end{document}